\pgfplotsset{compat=newest}
\newcommand{\bld}[1]{\boldsymbol{#1}}
\newcommand{\ri}{\mathop{\rm ri}\nolimits}
\newcommand{\dom}{\mathop{\rm dom}\nolimits}
\newcommand{\blkdiag}{\mathop{\rm blkdiag}\nolimits}
\newcommand{\prox}{\mathop{\rm prox}\nolimits}
\newcommand{\minimize}{\operatorname{minimize}}  
\renewcommand{\Re}{{\rm{I\!R}} }
\newcommand{\Rinf}{\overline{\rm I\!R}}
\newcommand{\Nn}{{\rm{I\!N}}}
\newcommand{\zer}{\mathop{\rm zer}\nolimits}
\newcommand{\gra}{\mathop{\rm gra}\nolimits}
\DeclareMathOperator*{\argmin}{\arg\!\min}
\newcommand{\stt}{\rm subject\ to}
\newtheorem{thm}{Theorem}
\newtheorem{lem}{Lemma}
\newtheorem{ass}{Assumption}
\newtheorem{deff}{Definition}
\crefname{thm}{Thm.}{Thm.}
\Crefname{thm}{Theorem}{Theorems}
\crefname{prop}{Prop.}{Prop.}
\Crefname{prop}{Proposition}{Propositions}
\crefname{coro}{Cor.}{Cro.}
\Crefname{coro}{Corollary}{Corollaries}
\crefname{lem}{Lem.}{Lem.}
\Crefname{lem}{Lemma}{Lemmas}
\crefname{algorithm}{Alg.}{Alg.}
\Crefname{algorithm}{Algorithm}{Algorithms}
\crefname{rem}{Rem.}{Rem.}
\Crefname{rem}{Remark}{Remarks}
\crefname{ass}{Ass.}{Ass.}
\Crefname{ass}{Assumption}{Assumption}
\crefname{deff}{Def.}{Def.}
\Crefname{deff}{Definition}{Definition}
\Crefname{section}{Section}{Sections}
\Crefname{enumi}{Assumption}{Assumptions}
\Crefname{figure}{Figure}{Figures}
\title{\LARGE \bf
New Primal-Dual Proximal Algorithm for Distributed Optimization
}
\author{Puya Latafat, Lorenzo Stella, Panagiotis Patrinos
\thanks{P. Latafat  is with IMT School for Advanced Studies Lucca, Piazza San Francesco 19, 55100 Lucca, Italy; Email: {\tt\small puya.latafat@imtlucca.it}}
\thanks{L. Stella  and P. Patrinos are with the Department of Electrical Engineering (ESAT-STADIUS) and Optimization in Engineering Center (OPTEC), KU Leuven, Kasteelpark  Arenberg 10, 3001 Leuven-Heverlee, Belgium; Emails: {\tt\small lorenzo.stella@esat.kuleuven.be, panos.patrinos@esat.kuleuven.be}} 
}
\begin{document}

\maketitle
\thispagestyle{empty}
\pagestyle{empty}

%%%%%%%%%%%%%%%%%%%%%%%%%%%%%%%%%%%%%%%%%%%%%%%%%%%%%%%%%%%%%%%%%%%%%%%%%%%%%%%%
\begin{abstract}
We consider a network of agents, each with its own private cost consisting of the sum of two possibly nonsmooth convex functions, one of which is composed with a linear operator. At every iteration each agent performs local calculations and can only communicate with its neighbors. The goal is to minimize the aggregate of the private cost functions and reach a consensus over a graph. We propose a primal-dual algorithm based on \emph{Asymmetric Forward-Backward-Adjoint} (AFBA), a new operator splitting technique introduced recently by two of the authors. Our algorithm includes the method of Chambolle and Pock as a special case and has linear convergence rate when the cost functions are piecewise linear-quadratic. We show that our distributed algorithm 
is easy to implement without the need to perform matrix inversions or inner loops.  We demonstrate through computational experiments how selecting the parameter of our algorithm can lead to larger step sizes and yield better performance. 
\end{abstract}

%%%%%%%%%%%%%%%%%%%%%%%%%%%%%%%%%%%%%%%%%%%%%%%%%%%%%%%%%%%%%%%%%%%%%%%%%%%%%%%%
\section{Introduction}

In this paper we deal with the distributed solution of the following
optimization problem:
\begin{equation}\label{eq:Problem}
\underset{x\in\Re^n}{\minimize} \ \sum_{i=1}^{N} f_i(x) + g_i(C_i x) %+ h_i(x),
\end{equation}
%Here, for all $i\in\{1,\ldots,N\}$, $C_i\in\Re^{m_i\times n}$ is a linear operator,
%$f_i:\Re^n\to\Re\cup\{+\infty\}$ and $g_i:\Re^{m_i}\to\Re\cup\{+\infty\}$ are extended real-valued
%closed, proper, convex functions and $h_i:\Re^n\to\Re$ is continuously
%differentiable with Lipschitz-continuous gradient.
where for $i=1,\ldots,N$, $C_i$ is a linear operator, $f_i$ and $g_i$ are proper closed convex and possibly nonsmooth functions. We further assume that the \emph{proximal mappings} associated with $f_i$ and $g_i$
are efficiently computable \cite{combettes2011proximal}. In a more general case we can include another continuously differentiable term with Lipschitz-continuous gradient in 
\eqref{eq:Problem} and use \cite[Algorithm 3]{AFBA2016} that includes the algorithm of V\~u and Condat \cite{vu2013splitting,condat2013primal} as special case. We do not pursue this here for clarity of exposition. 

%We consider the following optimization problem:
%\begin{equation}\label{eq:OriginalProblem}
%\minimize_x \ f(x) + g(C x) + h(x),
%\end{equation}
%Here, $C\in\Re^{m\times n}$ is a linear operator,
%$f:\Re^n\to\Re\cup\{+\infty\}$ and $g:\Re^{m}\to\Re\cup\{+\infty\}$ are extended real-valued
%closed, proper, convex functions and $h:\Re^n\to\Re$ is continuously
%differentiable with Lipschitz-continuous gradient.
%We further assume that the \emph{proximal mappings} associated with $f$ and $g$
%are efficiently computable.
%In particular, we consider the case where function $g$ is (block) separable, and therefore
%problem \eqref{eq:OriginalProblem} can be cast as
%\begin{equation}\label{eq:Problem}
%\minimize_x \ \sum_{i=1}^{N} f_i(x) + g_i(C_i x) + h_i(x),
%\end{equation}
%where $C_i$, $i\in\{1,\ldots,N\}$, are row blocks of $C$ of appropriate size.
Problems of this form appear in several application fields. In a distributed model predictive control setting, $f_i$ can represent individual finite-horizon costs for each agent, $C_i$ model the linear dynamics of each agent and possibly coupling constraints that are split through the introduction of extra variables, and $g_i$ model state and input constraints. 

In machine learning
and statistics the $C_i$ are feature matrices and functions $g_i$ measure the \emph{fitting}
of a predicted model with the observed data, while the $f_i$ is \emph{regularization} terms that enforces
some prior knowledge in the solution (such as sparsity, or belonging to a certain constraint set).
For example if $g_i$ is the so-called hinge loss and $f_i = \tfrac{\lambda}{2}\|\cdot\|_2^2$, for some $\lambda > 0$,
then one recovers the standard SVM model. If instead $f_i = \lambda\|\cdot\|_1$ then
one recovers the $\ell_1$-norm SVM problem \cite{zhu20041}. 

Clearly problem \eqref{eq:Problem} can be solved in a centralized fashion, when all the
data of the problem (functions $f_i$, $g_i$ and matrices $C_i$, for all $i\in\{1,\ldots,N\}$) are available at one
computing node. When this is the case one might formulate and solve the aggregated problem
\begin{equation*}%\label{eq:CentralizedProblem}
\underset{x\in\Re^n}{\minimize} \ f(x) + g(C x),
\end{equation*}
for which algorithms are available \cite{chambolle2011first,AFBA2016,briceno2011monotone+}. 
However, such a centralized approach is not realistic in many scenarios. For example, suppose
that $g_i(C_i x)$  models least-squares terms and $C_1,\ldots,C_N$ are very large features matrices.
Then collecting $C_1,\ldots,C_N$ into a single computer may be infeasible due to communication costs,
or even worse they may not fit into the computer's memory. Furthermore, the exchange of such information
may not be possible at all due to privacy issues.

Our goal is therefore to solve problem \eqref{eq:Problem} in a distributed
fashion. Specifically, we consider a connected network of $N$ computing agents,
where the $i$-th agent is able to compute proximal mappings of $f_i$, $g_i$, and matrix vector products with $C_i$ (and its adjoint operator). We want all the agents
to iteratively converge to a \emph{consensus} solution to \eqref{eq:Problem}, and to do so
by only exchanging variables among neighbouring nodes, i.e, no centralized computations (\emph{i.e.}, existence of a fusion center) are needed
during the iterations. 

To do so, we will propose a solution based on the recently introduced \emph{Asymmetric Forward-Backward-Adjoint} (AFBA) splitting method \cite{AFBA2016}. This new splitting technique solves monotone inclusion problems involving three operators, however, in this work we will focus on a special case that involves two terms. Specifically, we develop a distributed algorithm which is based on a special case of AFBA applied to the monotone inclusion corresponding to the primal-dual optimality conditions of a suitable \emph{graph splitting} of~\eqref{eq:Problem}. Our algorithm involves a nonnegative parameter $\theta$ which serves as a tuning knob that allows to recover different algorithms. In particular, the algorithm of \cite{chambolle2011first} is recovered in the special case when $\theta=2$. We demonstrate how tuning this parameter affects the stepsizes and ultimately the convergence rate of the algorithm.  %In particular, \Cref{Algorithm-5} seems very promising since it needs exactly the same amount of computation and communication per iteration, yet it seems to converge faster than  \Cref{Algorithm-Vu-Condat}.

Other algorithms have been proposed for solving problems similar to \eqref{eq:Problem} in a distributed way.
As a reference framework, all algorithms aim at solving in a distributed way the problem
\begin{equation*}
\underset{x\in\Re^n}{\minimize} \ \sum_{i=1}^N F_i(x).
\end{equation*}
In \cite{nedic2009distributed} a distributed subgradient method is proposed, and in \cite{duchi2012dual} this
idea is extended to the projected subgradient method.
More recently, several works focused on the use of
ADMM for distributed optimization.
In \cite{Boyd2010a} the generic ADMM for consensus-type problems is illustrated.
A drawback of this approach is that at every iteration the agents must solve a complicated subproblem that might
require an inner iterative procedure. In \cite{Parikh2014} another formulation is given for the
case where $F_i = f_i+g_i$, and only proximal mappings with respect to $f_i$ and $g_i$ are separately computed in each node.
Still, when either $f_i$ or $g_i$ is not separable (such as when they are composed with linear operators) these are
not trivial to compute and may require inner iterative procedures, or factorization of the data matrices involved.
Moreover, in both \cite{Boyd2010a, Parikh2014} a central node is required for accumulating each agents variables at every iteration,
therefore these formulations lead to \emph{parallel} algorithms rather than distributed. In \cite{teixeira2013optimal} the optimal
parameter selection for ADMM is discussed in the case of distributed quadratic programming problems.
In \cite{wei2012distributed, wei20131, makhdoumi2014broadcast}, fully distributed algorithms based on ADMM proposed,
assuming that the proximal mapping of $F_i$ is computable, which is impractical in many cases. In \cite{bianchi2014primal}  the authors propose a variation of the
V{\~u}-Condat algorithm \cite{condat2013primal,vu2013splitting}, having
ADMM as a special case, and show its application to distributed optimization where $F_i = f_i+g_i$, but no composition with a linear operator is involved.
Only proximal operations with respect to $f_i$ and $g_i$ and local exchange of variables (\emph{i.e.}, among neighboring nodes)
is required, and the method is analyzed in an asynchronous setting.

In this paper we deal with the more general scenario of problem \eqref{eq:Problem}.
The main features of our approach, that distinguish it from the related works mentioned above, are:
\begin{enumerate}[(i)]
\item We deal with $F_i$ that is the sum of two possibly nonsmooth functions one of which is composed with a linear operator.
%$F_i(x) = f_i(x)+g_i(C_i x)+h_i(x)$, where $h_i$ is assumed to
%be smooth and convex, while $f_i$ and $g_i$ are closed, proper, convex and possibly nonsmooth functions.
\item Our algorithm only require local exchange of information, i.e., only neighboring
nodes need to exchange local variables for the algorithms to proceed.
\item The iterations involve \emph{direct} operations on the objective terms. Only evaluations of
$\prox_{f_i}$, $\prox_{g^\star_i}$ and matrix-vector products with $C_i$ and $C_i^T$ are involved.
In particular, no inner subproblem needs to be solved iteratively by the computing agents, and no matrix inversions are required.
\end{enumerate}

The paper is organized as follows.
 \Cref{sec:Formulation} is devoted to  a formulation of problem
\eqref{eq:Problem} which is amenable to be solved in a distributed fashion by the proposed methods. In \Cref{sec:Algorithms} we detail how the primal-dual algorithm in \cite[Algorithm 6]{AFBA2016} together with an intelligent change of variables gives rise to distributed iterations. We then  discuss implementation considerations and convergence properties. In \Cref{sec:Simulations} we illustrate some numerical results for several values of the constant $\theta$, highlighting the improved performance for $\theta=1.5$.

%In \Cref{sec:AFBA} we briefly discuss AFBA, introduced in \cite{AFBA2016}, which will be baseof our distributed optimization algorithms. These will be described in \Cref{sec:Algorithms},along with implementation considerations and convergence properties.In \Cref{sec:Simulations} we will illustrate some numerical results obtained with our methods.

%%%%%%%%%%%%%%%%%%%%%%%%%%%%%%%%%%%%%%%%%%%%%%%%%%%%%%%%%%%%%%%%%%%%%%%%%%%%%%%%
\section{Problem Formulation} \label{sec:Formulation}
Consider problem~\eqref{eq:Problem}
%\begin{equation}\label{eq:SepProb}
%\underset{x\in\Re^n}{\minimize}\ \sum_{i=1}^N f_i(x)+h_i(x)+g_i(C_ix),
%\end{equation}
under the following assumptions:
\begin{ass}
For $i=1,\ldots,N$:
\begin{enumerate}[(i)]
	\item $C_i:\Re^n\to\Re^{r_i}$ are linear operators.
\item $f_i:\Re^n\to\Rinf$, $g_i:\Re^{r_{i}}\to\Rinf$ are proper closed convex functions, where $\Rinf=\Re\cup\{\infty\}$.  
%\item $h_i:\Re^n\to\Re$  are convex, continuously differentiable functions, with $\beta_{h_i}$-Lipschitz gradient, i.e., \[\|\nabla h_i(x)-\nabla h_i(x')\|\leq\beta_{h_i}\|x-x'\|,\;\; \forall x,x'\in\Re^n.\]
\item The set of minimizers of~\eqref{eq:Problem}, denoted by $S^\star$, is nonempty. 
\end{enumerate}
\end{ass}
%where $Q_i\in\Re^{n\times n}$ is symmetric positive definite. 
We are interested in solving problem~\eqref{eq:Problem} in a distributed fashion. Specifically, let $G=(V,E)$ be an undirected graph over the vertex set $V=\{1,\ldots,N\}$ with edge set $E\subset V\times V$. 
It is assumed that each node $i\in V$ is associated with a separate agent, and each agent maintains its own cost components $f_i$, $g_i$, $C_i$ which are assumed to be private, and its own opinion of the solution $x_i\in\Re^n$. The graph imposes communication constraints over agents. In particular, agent $i$ can communicate directly only with its neighbors $j\in\mathcal{N}_i=\{j\in V\ |\ (i,j)\in E\}$. We make the following assumption. 
\begin{ass} \label{ass:conctd}
Graph $G$ is connected.
\end{ass}
With this assumption, we reformulate the problem as
\begin{align*}
\underset{\bm{x}\in\Re^{Nn}}{\minimize}&\quad \sum_{i=1}^N f_i(x_i)+g_i(C_ix_i)\\
\stt&\quad x_i=x_j\qquad (i,j)\in E
\end{align*}
where  $\bm{x}=(x_1,\ldots,x_N)$. 
Associate any orientation to the unordered edge set $E$. Let $M=|E|$ and $B\in\Re^{N\times M}$ be the \emph{oriented node-arc incidence matrix}, where each column is associated with an edge $(i,j)\in E$ and has $+1$  and $-1$ in the $i$-th and $j$-th entry, respectively. Notice that the sum of each column of $B$ is equal to $0$. Let $d_i$ denote the degree of a given vertex, that is, the number of
vertices that are adjacent to it. We have $B B^\top=\mathcal{L}\in\Re^{N\times N}$, where $\mathcal{L}$ is the graph \emph{Laplacian} of $G$, i.e.,
$$\mathcal{L}_{ij}=\begin{cases}d_i&\textrm{ if } i=j,\\
-1&\textrm{ if } i\neq j \textrm{ and node $i$ is adjacent to node $j$},\\
0&\textrm{ otherwise}.
\end{cases}$$

Constraints $x_i=x_j$, $(i,j)\in E$ can be written in compact form as $A\bm{x}=0$, where $A=B^\top\otimes I_n\in\Re^{Mn\times Nn}$. Therefore, the problem is expressed as
\begin{equation} \label{eq:primal}
\underset{\bm{x}\in\Re^{Nn}}{\minimize}\quad \sum_{i=1}^N f_i(x_i)+g_i(C_ix_i)+\delta_{\{0\}}(A\bm{x}),
\end{equation}
%Let us denote by $\bar{B}_i^\top\in\Re^{1\times M}$ the $i-th$ row of $B$, i.e., $B=\begin{bmatrix}\bar{B}_1^\top\\\vdots\\ \bar{B}_N^\top\end{bmatrix}$
 %Notice that $\bar{B}_i$ has as many nonzero elements as the number of neighbors of node $i$, or equivalently equal to the number of edges connected to node $i$. Then the columns of $A$ are $A_i=\bar{B}_i\otimes I_n$ and the problem is expressed as
 where $\delta_{X}$ denotes the indicator function of a closed nonempty convex set, $X$. The dual problem is:
 \begin{equation} \label{eq:dual}
 \underset{\begin{subarray}{c}
   y_i\in\Re^{r_i} \\
   w\in\Re^{Mn}
   \end{subarray}}{\minimize}\quad \sum_{i=1}^N f_i^*(-A^\top_i w-C_i^\top y_i)+g_i^*(y_i),
 \end{equation}
 where $q^*$ denotes the Fenchel conjugate of a function $q$ and $A_i\in\Re^{Mn\times n}$  are the block columns of $A$. 
% The primal problem is expressed as
%\begin{align*}
%\underset{x_i\in\Re^n}{\minimize}&\quad \sum_{i=1}^N f_i(x_i)+h_i(x_i)+g_i(z_i)\\
%\stt&\quad \sum_{i=1}^NA_ix_i=0\\
%&\quad z_i=C_ix_i\qquad i=1,\ldots,N
%\end{align*}  
Let $\partial q$ denote the subdifferential of a convex function $q$. The primal-dual optimality conditions are 
%\begin{equation*}
%\begin{cases}
%0\in\partial f_{i}(x_{i})+\partial g_{i}(x_{i})+C_{i}^{*}y_{i}+A_{i}^{*}w, & i=1,\cdots,N\\
%0\in\partial h_{i}(z_{i})-y_{i} & i=1,\cdots,N\\
%C_{i}x_{i}-z_{i} & i=1,\cdots,N\\
%\sum_{i=1}^{N}A_{i}x_{i}=0
%\end{cases}
%\end{equation*}
%or equivalerntly 
\begin{equation} \label{eq:-1}
\begin{cases}
0\in\partial f_{i}(x_{i})+C_{i}^{\top}y_{i}+A_{i}^{\top}w, & i=1,\ldots,N\\
C_{i}x_{i}\in\partial g_{i}^{*}(y_{i}) & i=1,\ldots,N\\
\sum_{i=1}^{N}A_{i}x_{i}=0,
\end{cases}
\end{equation}
where $w\in\Re^{Mn}$, $y_i\in\Re^{r_i}$, for $i=1,\ldots,N$. 
The following condition will be assumed throughout the rest of the paper.
\begin{ass} \label{ass:exst-prmdul}
There exist $x_i\in \ri \dom f_i$ such that $C_i x_i\in \ri \dom g_i$, $i=1,\ldots,N$ and $\sum_{i=1}^{N}A_i x_i=0$\footnote{$\dom f$ denotes the domain of function $f$ and $\ri C$ is the relative interior of the set $C$.}. 
%Assume that there exists $\hat{x}\in \cap^N_{i=1}\ri \dom f_i$ such that $C_i\hat{x}\in \ri \dom g_i$ for $i=1,\cdots,N$ and $\sum_{i=1}^{N}A_i\hat{x}=0$.  $\dom f$ denotes the domain of function $f$ and $\ri C$ is the relative interior of the set $C$. 
\end{ass}
This assumption guarantees that the set of solutions to~\eqref{eq:-1} is nonempty (see  \cite[Proposition 4.3(iii)]{combettes2012primal}). If $(\boldsymbol{x}^\star,\boldsymbol{y}^\star,w^\star)$ is a solution to~\eqref{eq:-1}, then  $\boldsymbol{x}^\star$ is a solution to the primal problem~\eqref{eq:primal} and $(\boldsymbol{y}^\star,w^\star)$ to its dual~\eqref{eq:dual}. 
\section{Distributed Primal-Dual Algorithms} \label{sec:Algorithms}

In this section we provide the main distributed algorithm that is based on \emph{Asymmetric Forward-Backward-Adjoint} (AFBA), a new operator splitting technique introduced recently \cite{AFBA2016}. This special case belongs to the class of primal-dual algorithms. The convergence results include both the primal and dual variables and are based on \cite[Propositions 5.4]{AFBA2016}). However, the convergence analysis here focuses on the primal variables for clarity of exposition, with the understanding that a similar error measure holds for the dual variables. % converges with the same little-$o(1/(k+1))$ rate. We summarize these three algorithms here: 
%\begin{enumerate}\item Algorithm 5 introduced in \cite{AFBA2016}. In this algorithm when the non-smooth term $g$ is absent, the algorithm of Drori and Sabach and Teboulle in~\cite{drori2015simple} is recovered as a special case (see \cite[Remark 5.10]{AFBA2016}).\item V\~u-Condat algorithm \cite{vu2013splitting,condat2013primal} as special case of AFBA. 
%\item Algorithm 4 introduced in \cite{AFBA2016}. In this algorithm when the smooth term $h$ is absent the algorithm of Brice{\~n}o-Arias and Combettes \cite[Algorithm (4.8)]{briceno2011monotone+} is recovered as a special case (see \cite[Remark 5.8]{AFBA2016}). \end{enumerate}
%We use these algorithms to derive the distributed \Cref{Algorithm-5,Algorithm-Vu-Condat}, respectively.  Both of the distributed algorithms 

Our distributed algorithm 
consists of two phases, a local phase and the phase in which each agent  interacts with its neighbors according to the constraints imposed by the communication graph. Each iteration has the advantage of only requiring local matrix-vector products and proximal updates. Specifically, each agent performs $2$  matrix-vector products per iteration and transmits a vector of dimension $n$ to its neighbors. 

Before continuing we recall the definition of Moreau's proximal mapping. Let $U$ be a symmetric positive-definite matrix. 
 The \emph{proximal mapping} of a proper closed convex function $f:\Re^n\to\Rinf$ relative to $\|\cdot\|_U$ is defined by
\begin{equation*}
	\prox^U_f(x) = \argmin_{z\in \Re^n} f(z)+\frac{1}{2} \|x-z\|_U^2,
\end{equation*}
and when the superscript $U$ is omitted the same definition applies with respect to the canonical norm.  

 Let $\boldsymbol{u}=(\boldsymbol{x},\boldsymbol{v})$  where %${\boldsymbol{x}=(x_{1},\ldots,x_{N})}$,
 $\boldsymbol{v}=(\boldsymbol{y},w)$ and 
 $\boldsymbol{y}=(y_{1},\ldots,y_{N})$. 
 The optimality conditions in~\eqref{eq:-1}, can be written in the form of the following monotone inclusion: 
 \begin{equation}\label{eq:mon-inclusion}
 0\in D\boldsymbol{u} + M\boldsymbol{u} %+ F\boldsymbol{u}
 \end{equation}
 with 
\begin{align}
{D}(\boldsymbol{x},\boldsymbol{y},w)&=(\partial\boldsymbol{f}(\boldsymbol{x}), \partial\boldsymbol{g}^{*}(\boldsymbol{y}),0),\label{eq:Aopt}%\\{F}(\boldsymbol{x},\boldsymbol{y},w)&=(\nabla\boldsymbol{h}(\bm{x}),0,0),\label{eq:Copt}
\end{align}
and
\begin{equation*}
{M}=\left[\begin{array}{ccc}
  0 & \boldsymbol{C}^{\top} & {A}^{\top}\\
  -\boldsymbol{C} & 0 & 0\\
  -{A} & 0 & 0
  \end{array}\right],
\end{equation*}
where $\boldsymbol{f}(\boldsymbol{x})=\sum_{i=1}^{N}f_{i}(x_{i})$
 , %$\boldsymbol{h}(\boldsymbol{x})=\sum_{i=1}^{k}h_{i}(x_{i})$, 
  $\boldsymbol{g}^{*}(\boldsymbol{y})=\sum_{i=1}^{N}g_{i}^*(y_{i})$, $\boldsymbol{C}=\blkdiag(C_1,\ldots,C_N)$.  Notice that ${A}x=\sum_{i=1}^{N}A_{i}x_{i}$, ${A}^\top w=(A_1^\top w,\ldots,A_N^\top w)$. 
The operator $D+M$ is maximally monotone \cite[Proposition 20.23, Corollary 24.4(i)]{bauschke2011convex}.%\cite[12.17 Theorem]{rockafellar2009variational}.%, and the operator $F$ is cocoercive. \textcolor{red}{(not introducing $P$ and cocoercive w.r.t $\|\cdot\|_p$)}. 

Monotone inclusion~\eqref{eq:mon-inclusion}, \emph{i.e.}, the primal-dual optimality conditions~\eqref{eq:-1}, is solved by applying \cite[Algorithm 6]{AFBA2016}.  This results in the following iteration: 
\begin{subequations} \label{alg:5}
	\begin{align}
		{\boldsymbol{x}}^{k+1} & =\prox^{\Sigma^{-1}}_{ f}(\boldsymbol{x}^{k}-\Sigma C^{\top}\boldsymbol{y}^k-\Sigma A^{\top}w^{k}) \label{eq:a}\\
		\bar{\boldsymbol{y}}^{k} & =\prox^{\Gamma^{-1}}_{ g^{*}}(\boldsymbol{y}^{k}+\Gamma C(\theta{\boldsymbol{x}}^{k+1}+(1-\theta){\boldsymbol{x}}^{k}))\\
		\bar{w}^{k} & =w^{k}+\Pi A(\theta{\boldsymbol{x}}^{k+1}+(1-\theta){\boldsymbol{x}}^{k}) \label{eq:c}\\
		\boldsymbol{y}^{k+1} & =\bar{\boldsymbol{y}}^{k}+(2-\theta)\Gamma C({\boldsymbol{x}}^{k+1}-{\boldsymbol{x}}^{k}) \label{eq:d}\\
		w^{k+1} &= \bar{w}^k + (2-\theta)\Pi A({\boldsymbol{x}}^{k+1}-{\boldsymbol{x}}^{k}) \label{eq:e}
	\end{align}
\end{subequations}
where matrices $\Sigma,\Gamma,\Pi$ play the rule of stepsizes and are assumed to be positive definite. The iteration~\eqref{alg:5} can not be implemented in a distributed fashion because the dual vector $w$ consists of $M$ blocks corresponding to the edges. The key idea that allows distributed computations is to introduce the sequence
 \begin{equation}\label{eq:rho}
 (\rho^k_{i})_{k\in\Nn}=(A_{i}^{\top}w^{k})_{k\in\Nn}, \quad \textrm{for} \quad i=1,\ldots,N.
 \end{equation}
 This transformation replaces the stacked edge vector $w^k$ with corresponding node vectors $\rho_i$. More compactly, letting $\boldsymbol{\rho}^k=(\rho^k_{1},\ldots,\rho^k_{N})$, it follows from~\eqref{eq:c} and \eqref{eq:e} that 
\begin{equation}
\boldsymbol{\rho}^{k+1}=\boldsymbol{\rho}^{k}+ A^\top \Pi A(2{\boldsymbol{x}}^{k+1}-{\boldsymbol{x}}^{k}),
\end{equation}
 where $A^\top \Pi A$ is the \emph{weighted graph Laplacian}. Since $w^k$ in~\eqref{eq:a} appear as $A^\top w^k$ we can rewrite the iteration: 
 	\begin{align*}
 		{\boldsymbol{x}}^{k+1} & =\prox^{\Sigma^{-1}}_{ f}(\boldsymbol{x}^{k}-\Sigma C^{\top}\boldsymbol{y}^k-\Sigma \boldsymbol{\rho}^{k})\\
 		\bar{\boldsymbol{y}}^{k} & =\prox^{\Gamma^{-1}}_{ g^{*}}(\boldsymbol{y}^{k}+\Gamma C(\theta{\boldsymbol{x}}^{k+1}+(1-\theta){\boldsymbol{x}}^{k})) \\
 		\boldsymbol{y}^{k+1} & =\bar{\boldsymbol{y}}^{k}+(2-\theta)\Gamma C({\boldsymbol{x}}^{k+1}-{\boldsymbol{x}}^{k}) \\
 		\boldsymbol{\rho}^{k+1}&=\boldsymbol{\rho}^{k}+ A^\top \Pi A(2{\boldsymbol{x}}^{k+1}-{\boldsymbol{x}}^{k})
 	\end{align*}
Set 
\begin{align*}
	\Sigma&=\blkdiag\left(\sigma_1 I_n,\ldots,\sigma_N I_n\right),\\
	\Gamma&=\blkdiag\left(\tau_1 I_{r_1},\ldots,\tau_N I_{r_N}\right),\\
	\Pi&=\blkdiag\left(\pi_1 I_n,\ldots,\pi_M I_n\right),
\end{align*}
where $\sigma_i>0,\tau_i>0$ for $i=1,\ldots,N$ and $\pi_l>0$ for $l=1,\ldots,M$. Consider a bijective mapping between $l=1,\ldots,M$ and unordered pairs $(i,j)\in E$ such that $\kappa_{i,j}=\kappa_{j,i}=\pi_l$.
  Notice that $\pi_l$ for $l=1,\ldots,M$ are step sizes to be selected by the algorithm and can be viewed as weights for the edges. Thus, iteration~\eqref{alg:5} gives rise to our distributed algorithm:  
  \begin{algorithm}[H]
  	\caption{ %\textcolor{red}{Algorithm-5}
  	}
  	\label{Algorithm-5}
  	\begin{algorithmic}
  		\item\textbf{Inputs:} $\sigma_i>0$, $\tau_i>0$, $\kappa_{i,j}>0$ for $j\in\mathcal{N}_{i}$, $i=1,\ldots,N$, $\theta\in[0,\infty[$, \textrm{initial values $x_i^0\in\Re^n$, $y_i^0\in\Re^{r_i}$, $\rho_i^0\in\Re^n$}.
  		\For{$k=1,\ldots$}
  		\For {each agent $i=1,\ldots,N$}
  		\item{\hspace{0.9cm} Local steps:}
  		\State ${x}_{i}^{k+1}=\prox_{\sigma_{i}f_{i}}(x_{i}^{k}-\sigma_{i}\rho_{i}^{k}-\sigma_{i}C_i^\top y_{i}^{k})$
  		\State $\bar{y}_{i}^{k}=\prox_{\tau_{i}g_{i}^{*}}(y_{i}^{k}+\tau_{i}C_{i}(\theta x_i^{k+1}+(1-\theta)x_i^{k}))$
  		\State $y_i^{k+1} = \bar{y}_{i}^{k} + \tau_i(2-\theta)C_i (x_i^{k+1}-x_i^k)$
  		\State $u_i^k=2x_i^{k+1}-x_i^k$
  		\item {\hspace{0.9cm} Exchange of information with neighbors:}
  		\State $\rho_{i}^{k+1}=\rho_{i}^{k}+\sum_{j\in\mathcal{N}_{i}}\kappa_{i,j}(u_{i}^{k}-u_{j}^{k})$ 
  		\EndFor
  		\EndFor
  	\end{algorithmic}
  \end{algorithm} 
  Notice that each agent $i$ only requires $u_j^k\in\Re^n$ for $j\in\mathcal{N}_{i}$ during the communication  phase. %Furthermore, during local steps each agent performs two matrix vector products, $C_i^\top y_i^k$ and $C_i x_i^{k+1}$ and using $C_i x_i^{k}$ from the memory.     
  Before proceeding with convergence results, we define the following for simplicity of notation:
\begin{equation*}
\begin{split}
 \bar{\sigma}&=\max\{\sigma_1,\ldots,\sigma_N\},\\
 \bar{\tau}&=\max\{\tau_1,\ldots,\tau_N,\pi_1,\ldots,\pi_M\},\\
L&= \mathcal{L}\otimes I_n+\boldsymbol{C}^\top \boldsymbol{C},\ \textrm{where }\mathcal{L}\ \textrm{is the graph Laplacian.}
 \end{split}
\end{equation*}
It must be noted that the results in this section only provide choices of parameters that are sufficient for convergence. They can be selected much less conservatively by formulating and solving sufficient conditions that they must satisfy  as \emph{linear matrix inequalities} (LMIs). Due to lack of space we do not pursue this direction further, instead we plan to consider it in an extended version of this work.

\begin{thm} \label{thm:Algorithm-5}
	Let \Cref{ass:conctd,ass:exst-prmdul} hold true. Consider the sequence $(\boldsymbol{x}^{k})_{k\in\Nn}=(x_1^k,\ldots,x_N^k)_{k\in\Nn}$ generated by \Cref{Algorithm-5}. Assume the maximum stepsizes, \emph{i.e.}, $\bar{\sigma}$ and $\bar{\tau}$ defined above, are positive and satisfy     
	\begin{equation} \label{eq:alg5}
	\bar{\sigma}^{-1}-\bar{\tau}(\theta^2-3\theta+3)\|L\|>0,
	\end{equation}
	for a fixed value of $\theta\in[0,\infty[$. 
	Then %there exists a solution $x^{\star}$ to~\eqref{eq:Problem}, such that
	 the sequence $(\boldsymbol{x}^{k})_{k\in\Nn}$ converges to $(x^{\star},\ldots,x^{\star})$ for some $x^\star\in S^\star$. Furthermore, if $\theta=2$ the strict inequality~\eqref{eq:alg5} is replaced with  $\bar{\sigma}^{-1}-\bar{\tau}\|L\|\geq0$.
	
	 %\textcolor{red}{The condition for convergence rate is way to complicated to derive, even a conservative one}. %If in addition 	\[  \beta\bar{\sigma}< 4-\bar{\sigma}\bar{\tau}\|L\|-\frac{(\sqrt{\bar{\sigma}\bar{\tau}\|L\|}+2)^3}{4-\bar{\sigma}\bar{\tau}\|L\|}	\]	holds, then  $\|\boldsymbol{x}^{k}-\boldsymbol{\bar{x}}^{k}\|^{2}=o(1/(k+1))$.
\end{thm}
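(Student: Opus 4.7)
The plan is to derive Theorem~\ref{thm:Algorithm-5} as a specialization of the AFBA convergence result from \cite[Proposition 5.4]{AFBA2016} applied to iteration~\eqref{alg:5}. First, I would point out that \Cref{Algorithm-5} is algebraically equivalent to~\eqref{alg:5}: the only change is the substitution $\boldsymbol{\rho}^k = A^\top w^k$ (with compatible initialization, \emph{e.g.}\ $\rho_i^0=0$). Since $A^\top w^k$ is all that appears in the primal step~\eqref{eq:a}, and since combining~\eqref{eq:c} and~\eqref{eq:e} yields $\boldsymbol{\rho}^{k+1}=\boldsymbol{\rho}^k+A^\top\Pi A(2\boldsymbol{x}^{k+1}-\boldsymbol{x}^k)$ where $A^\top\Pi A$ has the distributed graph-Laplacian structure exploited in the update of $\rho_i$, the iterates $(\boldsymbol{x}^k,\boldsymbol{y}^k)$ produced by the two schemes coincide. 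Hence convergence of $(\boldsymbol{x}^k)$ in \Cref{Algorithm-5} follows from convergence of iteration~\eqref{alg:5}.

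Next I would invoke the convergence criterion behind \cite[Algorithm 6]{AFBA2016}, which asks that the preconditioning metric (built from $\Sigma,\Gamma,\Pi$) be positive definite relative to the coupling matrix $[\boldsymbol{C}^\top\ A^\top]^\top$. Writing the metric in block form and applying a Schur-complement reduction, the condition becomes
\begin{equation*}
\Sigma^{-1} \succ (\theta^2-3\theta+3)\bigl(\boldsymbol{C}^\top \Gamma \boldsymbol{C} + A^\top \Pi A\bigr),
\end{equation*}
where the quadratic factor $\theta^2-3\theta+3$ (minimized at $\theta=3/2$) comes from the asymmetric extrapolation/correction steps specific to AFBA Algorithm~6 and accounts for the over-relaxation by $(2-\theta)$ in~\eqref{eq:d}--\eqref{eq:e}. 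Using the block-diagonal structure of $\Sigma,\Gamma,\Pi$, the right-hand side is bounded above by $\bar{\tau}(\boldsymbol{C}^\top\boldsymbol{C}+A^\top A)=\bar{\tau}L$ (recalling $A^\top A=\mathcal{L}\otimes I_n$), and the left-hand side is bounded below by $\bar{\sigma}^{-1}I$. A sufficient scalar condition for positive definiteness is therefore exactly~\eqref{eq:alg5}. In the Chambolle--Pock case $\theta=2$ the refined analysis of \cite{chambolle2011first} permits the non-strict inequality.

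With the metric condition verified, \cite[Proposition 5.4]{AFBA2016} yields convergence of $(\boldsymbol{x}^k,\boldsymbol{y}^k,w^k)$ to some $(\boldsymbol{x}^\star,\boldsymbol{y}^\star,w^\star)$ solving the monotone inclusion~\eqref{eq:mon-inclusion}, which by the discussion following~\eqref{eq:-1} means $\boldsymbol{x}^\star$ solves~\eqref{eq:primal}. Feasibility of $\boldsymbol{x}^\star$ implies $A\boldsymbol{x}^\star=0$, \emph{i.e.}\ $x_i^\star=x_j^\star$ for every edge $(i,j)\in E$; by \Cref{ass:conctd} the graph is connected, so there exists a common $x^\star$ with $x_i^\star=x^\star$ for all $i$, and by construction $x^\star\in S^\star$.

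The main obstacle I anticipate is the second step: pinning down the precise coefficient $(\theta^2-3\theta+3)$ in the metric inequality. The raw condition coming from AFBA is a block-matrix positive-definiteness statement involving $\theta$-dependent off-diagonal couplings, and extracting the right scalar bound requires a careful Schur/Young-inequality argument (or direct diagonalization in a $\theta$-parameterized basis) to avoid losing tightness. Once this inequality is in hand, the reduction to~\eqref{eq:alg5} via the block-diagonal step-size structure and the identity $A^\top A=\mathcal{L}\otimes I_n$ is routine, and the consensus conclusion follows immediately from connectedness.
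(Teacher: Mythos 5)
Your proposal follows essentially the same route as the paper: the paper's proof simply observes that \Cref{Algorithm-5} is an implementation of \cite[Algorithm 6]{AFBA2016} via the change of variables $\boldsymbol{\rho}^k=A^\top w^k$, invokes \cite[Proposition 5.4]{AFBA2016} for convergence under the step-size condition (adapted from scalar to diagonal step sizes), and combines this with \Cref{ass:conctd} to get consensus. Your additional elaboration of where the factor $\theta^2-3\theta+3$ comes from, and your remark that $\rho_i^0$ must be compatibly initialized (i.e.\ lie in the range of $A^\top$) for the exact equivalence with iteration~\eqref{alg:5}, are both sound and in fact more careful than what the paper writes.
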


\begin{proof}
	\Cref{Algorithm-5} is an implementation of \cite[Algorithm 6]{AFBA2016}. %\cite{vu2013splitting,condat2013primal}. 
	Thus convergence of $(\bld{x}^k)_{k\in\Nn}$ to a solution of~\eqref{eq:primal} is implied by \cite[Proposition 5.4]{AFBA2016}. Combining this with \Cref{ass:conctd} yields the result. %See \cite[Remark 5.9]{AFBA2016} for convergence rates. 
	Notice that in that work the step sizes are assumed to be scalars for simplicity. It is straightforward to adapt the result to the case of diagonal matrices. %For the last part refer to \cite[Proposition 5.1(1-iv)]{AFBA2016} with relaxation term equal to $1$. 
	%\Cref{Algorithm-5} is an implementation of \cite[Algorithm 5]{AFBA2016} and the convergence results follow from \cite[Proposition 5.3(ii)]{AFBA2016} (See Remark 5.10 in that work for convergence rate). %.\textcolor{red}{probably it's better to add convergence rate for this case as remark in AFBA paper and refer to it, otherwise it's not feasible to introduce $D$ here and then calculate $c_1,c_2$ and show that the convergence rate holds, or is it?}
\end{proof}
In \Cref{Algorithm-5} when $\theta=2$, we recover the algorithm of Chambolle and Pock \cite{chambolle2011first}. 
One important observation is that the term $\theta^2-3\theta+3$ in~\eqref{eq:alg5} is always positive and achieves its minimum at $\theta=1.5$. This is a choice of interest for us since it results in larger stepsizes, ${\sigma}_i,\tau_i, \kappa_{i,j}$, and consequently better performance as we observe in numerical simulations. 

Next, we provide easily verifiable conditions for $f_i$ and $g_i$, under which  linear convergence of the iterates can be established. We remark that these are just sufficient and certainly less conservative conditions can be provided but are omitted for clarity of exposition. Let us first recall the following definitions from  \cite{rockafellar2009variational,dontchev2009implicit}: 
%	\begin{deff}[Polyhedral]		A set $C\subset\Re^n$ is called polyhedral if it can be expressed as the intersection of finite family of closed half-spaces or hyperplanes.	\end{deff}
\begin{deff}[Piecewise Linear-Quadratic]
	A function $f:\Re^n\to\Rinf$ is called piecewise linear-quadratic (PLQ) if it's domain can be represented as union of finitely many polyhedral sets, relative to each of which $f(x)$ is given by an expression of the form $\frac{1}{2}x^\top Qx+d^\top x+c$, 
	for some $c\in\Re$, $d\in\Re^n$, and $D\in\Re^{n\times n}$. 
	\end{deff}
	The class of  piecewise linear-quadratic functions has been much studied and has many desirable properties (see \cite[Chapter 10 and 11]{rockafellar2009variational}). 
Many practical applications involve PLQ functions such as quadratic function, $\|\cdot\|_1$, indicator of polyhedral sets, hinge loss, \emph{etc}. Thus, the $R$-linear convergence rate that we establish in \Cref{thm:conv-rate} holds for a wide range of problems encountered in control, machine learning and signal processing. 
	\begin{deff}[Metric subregularity] \label{metricsub}
		A set-valued mapping $F:\Re^n\rightrightarrows\Re^n$ is metrically subregular at $z$ for $z^\prime$ if $(z,z^\prime)\in\gra F$ and there exists $\eta\in[0,\infty[$, %$\nu\in]0,\infty[$ 
		a neighborhood $\mathcal{U}$ of $z$ and $\mathcal{V}$ of $z^\prime$ such that 
		%\begin{equation} \label{eqnn:calmness}
		%F(y)\cap U \subset F(\bar{y}) + \eta\|y\|\mathbb{B}
		%  \quad \forall y\in \HH \quad with \; \|y\|\leq\nu,
		%\end{equation}
		%where $\mathbb{B}=\{x\in\HH|\|x\|\leq 1\}$.
		\begin{equation}\label{eqnn:metricsubregularity}
		d(x,F^{-1}z^\prime)\leq \eta d(z^\prime,Fx\cap \mathcal{V})  \;\; \textrm{for all} \; x\in \mathcal{U},%\; \textrm{with} \; \|F(x)\|\leq\nu.
		\end{equation}
		where $\gra F=\{(x,u)|u\in Fx\}$ and $d(\cdot,X)$ denotes the distance from set $X$.  
	\end{deff}
\begin{thm}\label{thm:conv-rate}
	Consider \Cref{Algorithm-5} under the assumptions of \Cref{thm:Algorithm-5}. Assume $f_i$ and $g_i$ for $i=1,\ldots,N$, are piecewise linear-quadratic functions. Then the set valued mapping $T=D+M$  is \emph{metrically subregular} at any $z$ for any $z^\prime$ provided that $(z,z^\prime)\in\gra T$.
	 Furthermore, the sequence $(\boldsymbol{x}^k)_{k\in\Nn}$ converges R-linearly\footnote{The sequence $(x_{n})_{n\in\Nn}$ converges to $x^{\star}$ $R$-linearly if there is a sequence of nonnegative scalars $({v_n})_{n\in\Nn}$ such that  $\|x_{n}-x^{\star}\|\leq v_n$ and $(v_n)_{n\in\Nn}$ converges $Q$-linearly\footnotemark{} to zero.} \footnotetext{The sequence $(x_{n})_{n\in\Nn}$ converges to $x^{\star}$ $Q$-linearly with $Q$-factor given by $\sigma\in]0,1[$, if for $n$ sufficiently large  $\|x_{n+1}-x^{\star}\|\leq\sigma\|x_{n}-x^{\star}\|$ holds.}to $(x^{\star},\ldots,x^{\star})$ for some $x^\star\in S^\star$.  
\end{thm}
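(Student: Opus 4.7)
The plan is to split the argument into two independent parts: first, establish that the operator $T = D + M$ has a piecewise polyhedral graph and is therefore metrically subregular at every point of its graph; second, invoke the general linear convergence theory of AFBA under metric subregularity to deduce R-linear convergence of the primal iterates.

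For the first part, I would argue structurally. The class of PLQ functions is closed under Fenchel conjugation (see \cite[Ch.~11]{rockafellar2009variational}), so each $g_i^*$ is PLQ whenever $g_i$ is. It is classical that the subdifferential of a PLQ convex function is a polyhedral set-valued mapping, meaning that its graph is a finite union of polyhedral convex sets. Hence $\gra \partial \boldsymbol{f}$ and $\gra \partial \boldsymbol{g}^*$ are piecewise polyhedral, and so is $\gra D$, since the $w$-block only contributes the linear subspace $\Re^{Mn} \times \{0\}$ in the Cartesian product. Because $M$ is a (skew-symmetric) linear operator,
\begin{equation*}
\gra T = \{(u, v + Mu) : (u,v) \in \gra D\}
\end{equation*}
is an invertible affine image of $\gra D$, and is therefore also a finite union of polyhedral sets. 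A classical theorem of Robinson on polyhedral multifunctions then yields that $T^{-1}$ is locally upper Lipschitz at every point of its graph, which is equivalent to $T$ being metrically subregular at any $z$ for any $z'$ with $(z,z') \in \gra T$, with a uniform local modulus.

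For the second part, I would appeal to the AFBA linear-convergence framework in \cite{AFBA2016}. \Cref{Algorithm-5} is exactly \cite[Algorithm~6]{AFBA2016} rewritten through the change of variables $\boldsymbol{\rho}^k = A^\top w^k$, so the underlying iteration is a (relaxed) fixed-point scheme for the inclusion $0 \in T z$. Combining the descent inequality already used for \Cref{thm:Algorithm-5} with the error bound $\dist(z^k, T^{-1}(0)) \leq \eta\,\dist(0, T z^k)$ supplied by metric subregularity yields Q-linear decay of $\|z^k - z^\star\|$ in the weighted norm induced by $\Sigma, \Gamma, \Pi$. Restricting to the primal block of $z^k$ gives R-linear convergence of $(\boldsymbol{x}^k)$, and the consensus property $x_i^\star = x^\star$ for all $i$ follows from \Cref{ass:conctd} as in the proof of \Cref{thm:Algorithm-5}.

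The main obstacle I anticipate is in the second part: reconciling the form of metric subregularity supplied by Robinson (phrased in the Euclidean norm on the full primal-dual space) with the weighted norm in which the AFBA descent inequality is naturally expressed, and checking that the change of variables from the edge-dual $w$ to the node-dual $\boldsymbol{\rho}$ preserves the error bound so that no degradation of the rate occurs. The piecewise polyhedrality step itself is essentially automatic once one notes that the class of polyhedral multifunctions is preserved under Cartesian products with linear subspaces and under invertible affine transformations of the graph.
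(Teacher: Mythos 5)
Your proposal is correct and follows essentially the same route as the paper: PLQ data implies (via conjugation and the polyhedrality of subdifferentials of convex PLQ functions) that $\gra D$, and hence $\gra T$ as a linear image of it, is piecewise polyhedral, so metric subregularity follows from the calmness of polyhedral multifunctions (the paper cites \cite[Prop.~3H.1 and Thm.~3H.3]{dontchev2009implicit}, which is the Robinson result you invoke), and R-linear convergence of the primal block then follows from the general linear-convergence theorem for AFBA in \cite[Theorem~3.3]{AFBA2016}. The norm-reconciliation issue you flag is absorbed into that cited theorem, which already delivers linear convergence of the full stacked primal--dual vector in the relevant weighted norm.
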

\begin{proof}
	Function $\boldsymbol{f}(\boldsymbol{x})=\sum_{i=1}^{N}f_{i}(x_{i})$ is piecewise linear-quadratic since $f_i$ for $i=1,\ldots,N$, are assumed to be PLQ. Similarly, it follows from \cite[Theorem 11.14 (b)]{rockafellar2009variational} that $\boldsymbol{g}^*$ is  piecewise linear-quadratic. The subgradient mapping of a proper closed convex PLQ function is piecewise polyhedral, \emph{i.e.} its graph is the union of finitely many polyhedral sets  \cite[Proposition 12.30 (b)]{rockafellar2009variational}. This shows that $D$ defined in~\eqref{eq:Aopt} is piecewise polyhedral. Since the image of a polyhedral under affine transformation remains piecewise polyhedral, and $M$ is a linear operator, graph of $T=D+M$ is piecewise polyhedral. Consequently, its inverse $T^{-1}$ is piecewise polyhedral. Thus by \cite[Proposition 3H.1]{dontchev2009implicit} the mapping $T^{-1}$ is calm at any $z^\prime$ for any $z$ satisfying $(z^\prime,z)\in \gra T^{-1}$. This is equivalent to the metric subregularity characterization of the operator $T$ \cite[Theorem 3H.3]{dontchev2009implicit}. The second part of the proof follows directly by noting that \cite[Algorithm 6]{AFBA2016} used to derive \Cref{Algorithm-5}  is a special case of \cite[Algorithm 1]{AFBA2016}. Therefore, linear convergence follows from first part of the proof together with \cite[Theorem 3.3]{AFBA2016}. The aforementioned theorem guarantees linear convergence for the stacked vector $\boldsymbol{u}$ in \eqref{eq:mon-inclusion}, however, here we consider the primal variables only.
\end{proof}

\subsection{Special Case} Consider the following problem
\begin{equation}\label{eq:specialprob2}
\underset{x\in\Re^n}{\minimize} \ \sum_{i=1}^{N} f_i(x),
\end{equation}
where $f_i:\Re^n\to\Rinf$ for $i=1,\ldots,N$ are proper closed convex functions. This is a special case of~\eqref{eq:Problem} when  $g_i\circ C_i\equiv0$. Since functions $g_i$ are absent, the dual variables $y_i$ in \Cref{Algorithm-5} vanish and for any choice of $\theta$ the algorithm reduces to: 
\begin{align*}
	x_i^{k+1}&=\prox_{\sigma_{i}f_{i}}(x_{i}^{k}-\sigma_{i}\rho_{i}^{k})\\
	u_i^k&= 2x_i^{k+1}-x_i^k\\
	\rho_{i}^{k+1}&=\rho_{i}^{k}+\sum_{j\in\mathcal{N}_{i}}\kappa_{i,j}({u}_{i}^{k}-{u}_{j}^{k}).
\end{align*}
%for any $\theta\in[0,\infty[$. 
Thus setting $\theta=1.5$ in \eqref{eq:alg5} to maximize the stepsizes yields $\bar{\sigma}^{-1}-\frac{3\bar{\tau}}{4}\|\mathcal{L}\|>0$, where $\mathcal{L}$ is the graph Laplacian.

\section{Numerical Simulations}\label{sec:Simulations}

We now illustrate experimental results obtained by applying the proposed algorithm to the following problem:  
\begin{equation}\label{eq:Example}\begin{split}
\minimize \ & \lambda\|x\|_1 + \sum_{i=1}^N \tfrac{1}{2}\|D_ix - d_i\|_2^2 %\\\stt \ & C_i x \leq c_i, \quad i=1,\ldots, N.
\end{split}\end{equation}
for a positive parameter $\lambda$. This is the ${\ell}_1$ regularized least-squares problem. Problem \eqref{eq:Example} is of the form \eqref{eq:Problem} if we set for $i=1,\ldots,N$
\begin{equation}\begin{split}
f_i(x) &= \tfrac{\lambda}{N}\|x\|_1,\\
g_i(z) &= \tfrac{1}{2}\|z - d_i\|_2^2,\\
C_i &= D_i
\end{split}\end{equation}
where $D_i\in\Re^{m_i\times n}$,  $d_i\in\Re^{m_i}$. 
For the experiments we used graphs of $N=50$ computing agents, generated randomly according to the Erd\H{o}s-Renyi model, with parameter $p=0.05$.
In the experiments we used $n=500$ and generated
$D_i$ randomly with normally distributed entries,
with $m_i = 50$ for all $i=1,\ldots,N$. Then we generated vector $d_i$
starting from a known solution for the problem and ensuring {
$\lambda<0.1 \|\sum_i^N D_i^\top d_i\|_{\infty}$}.

For the stepsize parameters we set
$\sigma_i = \bar{\sigma}$, $\tau_i = \bar{\tau}$, for all $i=1,\dots,N$, and $\kappa_{i,j} = \kappa_{j,i} = \bar{\tau}$ for all edges $(i,j)\in E$, such that \eqref{eq:alg5} is satisfied. In order to have a fair comparison we selected $\bar{\sigma}=\alpha/\|L\|$ and $\bar{\tau}=0.99/(\alpha(\theta^2-3\theta+3))$ with $\alpha=20$  which  was set empirically based on better performance of all the algorithms. 

The results are illustrated in \Cref{fig1}, for several values of $\theta$, where the distribution of the number of communication rounds required by the algorithms to reach a relative error of $10^{-6}$ is reported. In \Cref{fig2} the convergence of algorithms is illustrated in one of the instances.  It
should be noted that the algorithm of Chambolle and Pock, that corresponds to $\theta=2$, is generally slower than the case $\theta=1.5$. This is mainly due to the larger stepsize parameters guaranteed by \Cref{thm:Algorithm-5}.

\begin{figure}
	%\centering
	%\raggedleft
	\begin{minipage}{\columnwidth}
		\hspace*{0.2cm}{\centering\includegraphics{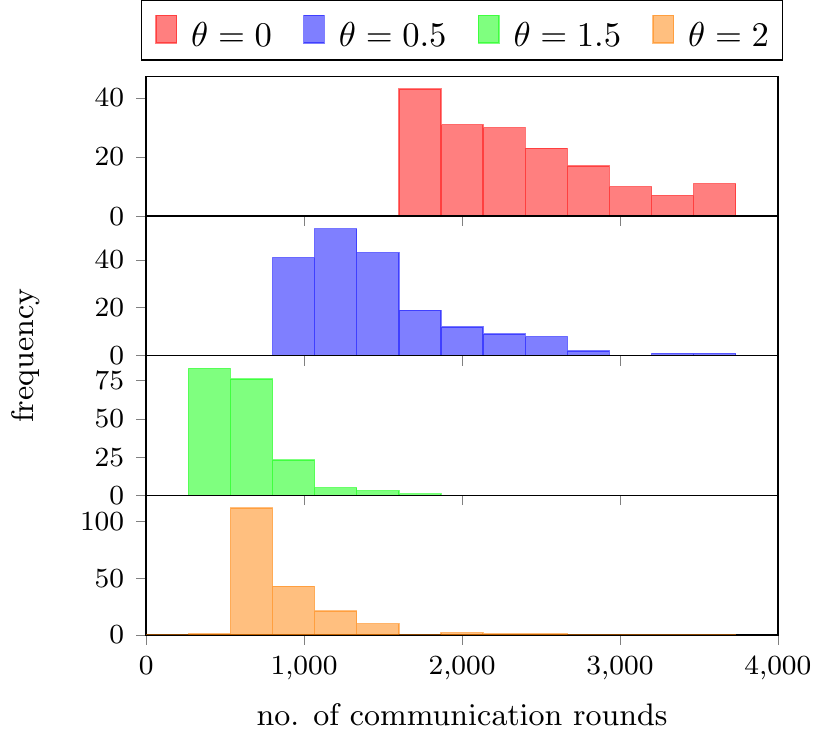}}
	\end{minipage}
		\caption{Distribution of the number of communication rounds required by the algorithms to achieve a relative error of $10^{-6}$, for fixed data and $200$ randomly generated Erd\H{o}s-Renyi graphs, with parameter $p=0.05$.}
		\label{fig1}
\end{figure}

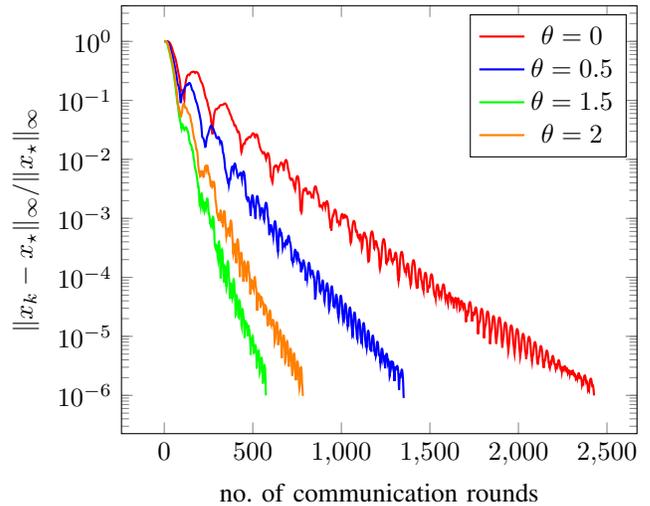
\begin{figure}
\center
\begin{tikzpicture}
\begin{axis}[ymode=log,
xlabel={no. of communication rounds},
ylabel={$\|x_k-x_\star\|_\infty/\|x_\star\|_\infty$}
]
\addplot[color=red,thick]
	file {plotdata/alg1_convergence.dat};
	\addplot[color=blue,thick]
	file {plotdata/alg2_convergence.dat};
	\addplot[color=green,thick]
	file {plotdata/alg3_convergence.dat};
	\addplot[color=orange,thick]
	file {plotdata/alg4_convergence.dat};
%\addplot[color=brown,thick]	file {plotdata/alg5_convergence.dat};
%\addplot[color=orange,thick]	file {plotdata/alg6_convergence.dat};
	\legend{$\theta=0$, $\theta=0.5$, $\theta=1.5$, $\theta=2$}
\end{axis}
\end{tikzpicture}
\caption{Convergence of the relative error for the algorithms, in one of the considered instances.}
\label{fig2}
\end{figure}

%%%%%%%%%%%%%%%%%%%%%%%%%%%%%%%%%%%%%%%%%%%%%%%%%%%%%%%%%%%%%%%%%%%%%%%%%%%%%%%%
\section{Conclusions}
In this paper we illustrated how the recently proposed Asymmetric Forward-Backward-Adjoint
splitting method (AFBA)  can be used for
solving distributed optimization problems where a set of $N$ computing agents, connected
in a graph, need to minimize the sum of $N$ functions. The resulting
\Cref{Algorithm-5} only involves
local exchange of variables (i.e., among neighboring nodes) and therefore no central
authority is required to coordinate them. Moreover, the single nodes only require
direct computations of the objective terms, and do not need to perform inner iterations
and matrix inversions. Numerical experiments highlight that \Cref{Algorithm-5} performs generally better
 when the parameter $\theta$ is set equal to $1.5$ in order to achieve the largest stepsizes.
Future investigations on this topic include the study of how the topological structure
of the graph underlying the problem affects the convergence rate of the proposed methods, as well as problem preconditioning in a distributed fashion. Developing  asynchronous versions of the algorithm 
is another important future research direction.  
%%%%%%%%%%%%%%%%%%%%%%%%%%%%%%%%%%%%%%%%%%%%%%%%%%%%%%%%%%%%%%%%%%%%%%%%%%%%%%%%

\bibliographystyle{IEEEtran}
\bibliography{IEEEabrv,refrences}

\end{document}